\newtheorem{theorem}{Theorem}
\newcommand{\bb}[2]{\binom{#1}{#2}}
\title{A 920-block explicit construction guaranteeing a triple intersection with every $6$-subset of $[60]$}
\author{Paulo Henrique Cunha Gomes\\ School of Technology (FT) – University of Campinas (Unicamp)\\Limeira,13484-332,São Paulo,Brazil\\ e-mail:p072049@dac.unicamp.br}
\date{}
\begin{document}

\maketitle

\begin{abstract}
We present an explicit family $\mathcal{B}$ of $920$ subsets of size $6$ of $[60]=\{1,\dots,60\}$
with the property that every $6$-subset $S\subset[60]$ intersects at least one block $B\in\mathcal{B}$
in at least three elements, i.e.\ $|S\cap B|\ge 3$.
The construction is purely combinatorial, based on a partition of the ground set into pairs and a
pigeonhole argument. We also record a simple counting lower bound and discuss how different
partitions of the ten base blocks affect the emergence of triple intersections.
\end{abstract}

\noindent\textbf{Keywords:} covering designs; packing and covering; block designs; set systems; Johnson graph; explicit construction; pigeonhole principle.

\noindent\textbf{2020 Mathematics Subject Classification:} Primary 05B40; Secondary 05B05.

\section{Introduction}
Let $[n]=\{1,2,\dots,n\}$. Fix $n=60$ and $k=6$. We study families
$\mathcal{B}\subset \bb{[60]}{6}$ satisfying
\[
\forall S\in \bb{[60]}{6}\ \exists B\in \mathcal{B}\ \text{ such that }\ |S\cap B|\ge 3.
\]
Equivalently, $\mathcal{B}$ is a covering of the Johnson graph $J(60,6)$ with covering radius $3$
(since the Johnson distance between two $6$-sets equals $6-|S\cap B|$).

\section{Construction}
Partition the ground set into two disjoint halves
\[
A=\{1,\dots,30\},\qquad C=\{31,\dots,60\}.
\]
Partition each half into $15$ disjoint pairs, explicitly:
\[
P_i=\{2i-1,2i\}\subset A\quad (1\le i\le 15),
\qquad
Q_j=\{30+2j-1,30+2j\}\subset C\quad (1\le j\le 15).
\]
Define the two families
\[
\mathcal{B}_A=\{P_i\cup P_j\cup P_k:1\le i<j<k\le 15\},
\qquad
\mathcal{B}_C=\{Q_i\cup Q_j\cup Q_k:1\le i<j<k\le 15\}.
\]
Each block in $\mathcal{B}_A$ and $\mathcal{B}_C$ has size $6$, and
\[
|\mathcal{B}_A|=|\mathcal{B}_C|=\bb{15}{3}=455.
\]
Finally, add the ten base blocks of the fixed partition of $[60]$ into consecutive $6$-sets:
\[
G_1=\{1,\dots,6\},\ G_2=\{7,\dots,12\},\ \dots,\ G_{10}=\{55,\dots,60\}.
\]
Let
\[
\mathcal{B}=\mathcal{B}_A\ \cup\ \mathcal{B}_C\ \cup\ \{G_1,\dots,G_{10}\}.
\]
Then $|\mathcal{B}|=455+455+10=920$.

\section{Main result}
\begin{theorem}\label{thm:main}
Let $\mathcal{B}$ be the family of $920$ blocks constructed above.
For every $S\subset[60]$ with $|S|=6$, there exists $B\in\mathcal{B}$ such that
$|S\cap B|\ge 3$.
\end{theorem}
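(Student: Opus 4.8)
The plan is to argue by a two-level pigeonhole, using only the blocks in $\mathcal{B}_A\cup\mathcal{B}_C$; the ten blocks $G_1,\dots,G_{10}$ will turn out to be unnecessary for this particular statement, though they are relevant to the discussion of alternative partitions promised in the abstract.

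First I would split $S$ between the two halves. Writing $a=|S\cap A|$ and $c=|S\cap C|$, we have $a+c=6$, so at least one of $a,c$ is $\ge 3$. Since the construction is symmetric under the bijection $[60]\to[60]$ that swaps $A$ with $C$ (sending $x\mapsto x+30$ on $A$ and $x\mapsto x-30$ on $C$), which carries each $P_i$ to $Q_i$ and hence maps $\mathcal{B}_A$ bijectively onto $\mathcal{B}_C$ while preserving all intersection sizes, I may assume without loss of generality that $a\ge 3$. From here the entire argument takes place inside $A$ and produces a block of $\mathcal{B}_A$.

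Second, I would look at how $S\cap A$ is distributed across the fifteen pairs $P_1,\dots,P_{15}$. Let $T=\{\,i:P_i\cap S\neq\emptyset\,\}$. Because the $P_i$ partition $A$ into pairs, $2\,|T|\ge a\ge 3$, so $|T|\ge 2$. If $|T|\ge 3$, choose any three indices $i<j<k$ in $T$; each of $P_i,P_j,P_k$ contributes at least one element of $S$, so $|S\cap(P_i\cup P_j\cup P_k)|\ge 3$, and $P_i\cup P_j\cup P_k\in\mathcal{B}_A$ is the desired block. If $|T|=2$, say $T=\{i,j\}$, then all $a\ge 3$ elements of $S\cap A$ already lie in $P_i\cup P_j$; picking any third index $k\notin\{i,j\}$ (possible since $15>2$) gives $P_i\cup P_j\cup P_k\in\mathcal{B}_A$ with $|S\cap(P_i\cup P_j\cup P_k)|=a\ge 3$. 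This exhausts all cases.

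I do not expect a serious obstacle here; the proof is genuinely short. The only points that need care in the write-up are: (i) checking that the $A\leftrightarrow C$ symmetry really does let one assume $a\ge 3$ rather than merely handling the symmetric case separately; (ii) in the case $|T|=2$, remembering that a third ``filler'' pair must be adjoined, so that the object exhibited is a legal member of $\mathcal{B}_A$ — a union of \emph{three} pairs — and not merely $P_i\cup P_j$; and (iii) stating explicitly that $\mathcal{B}_A$ contains \emph{every} $3$-subset of $\{P_1,\dots,P_{15}\}$, which is exactly what makes the ``take any three (or two plus a filler)'' step legitimate.
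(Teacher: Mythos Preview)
Your proof is correct and follows essentially the same approach as the paper: split $S$ between $A$ and $C$, reduce by symmetry to $|S\cap A|\ge 3$, and then argue that either three distinct $P$-pairs are hit (giving a block directly) or only two are hit but already contain three elements of $S$ (so adjoin a filler pair). The only cosmetic difference is that the paper first fixes three elements $a,b,c\in S\cap A$ and looks at their pairs, whereas you look at the full index set $T$ of touched pairs; your observation that the ten blocks $G_1,\dots,G_{10}$ play no role in the proof is also implicit in the paper's argument.
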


\begin{proof}
Let $S\subset[60]$ with $|S|=6$, and write $S_A=S\cap A$ and $S_C=S\cap C$.
Since $|S_A|+|S_C|=6$, either $|S_A|\ge 3$ or $|S_C|\ge 3$.

Assume $|S_A|\ge 3$ (the case $|S_C|\ge 3$ is identical with $Q$-pairs).
Choose distinct elements $a,b,c\in S_A$.
Each of $a,b,c$ belongs to a unique pair among $\{P_1,\dots,P_{15}\}$; denote these pairs by
$P_{i_a},P_{i_b},P_{i_c}$.

If $i_a,i_b,i_c$ are distinct, then
$B=P_{i_a}\cup P_{i_b}\cup P_{i_c}\in\mathcal{B}_A$ contains $\{a,b,c\}$.

If two of the indices coincide, say $P_{i_a}=P_{i_b}$, choose any $r\notin\{i_a,i_c\}$ and set
$B=P_{i_a}\cup P_{i_c}\cup P_r\in\mathcal{B}_A$.
Again $B$ contains $\{a,b,c\}$.
Thus in all cases $|S\cap B|\ge 3$.
\end{proof}

\section{Bounds and minimality}
Let $M$ denote the minimum possible size of a family $\mathcal{F}\subset\bb{[60]}{6}$ satisfying
the triple-intersection property.

A single block $B$ covers exactly those $6$-subsets $S$ with $|S\cap B|\ge 3$; their number is
\[
N=\sum_{i=3}^{6}\bb{6}{i}\bb{54}{6-i}=517{,}870.
\]
Since $\bb{60}{6}=50{,}063{,}860$, any such family must have at least
\[
\left\lceil \frac{\bb{60}{6}}{N}\right\rceil
=
\left\lceil \frac{50{,}063{,}860}{517{,}870}\right\rceil
=97
\]
blocks. Therefore,
\[
97 \le M \le 920.
\]
Determining $M$ exactly (or improving the explicit upper bound significantly) remains open here.

\section{A (5,5) partition viewpoint}
The ten base blocks $G_1,\dots,G_{10}$ form a partition of $[60]$ into two groups of five blocks:
$A=G_1\cup\cdots\cup G_5$ and $C=G_6\cup\cdots\cup G_{10}$.
For any $6$-subset $S\subset[60]$, let $x=|S\cap A|$ and $y=|S\cap C|$.
Then $x+y=6$, hence $\max\{x,y\}\ge 3$, so at least one half contains a triple of $S$.

Moreover, each base block $G_i$ is internally partitioned into three disjoint pairs, so each half
yields $15$ disjoint pairs. The triple inside that half lies in at most three of these pairs; selecting
those pairs and taking their union yields a block in $\mathcal{B}_A$ or $\mathcal{B}_C$ intersecting
$S$ in at least three points, which is precisely the mechanism behind the construction.

\section{A (3,3,4) partition and an obstruction}
If instead the ten base blocks are partitioned into three groups of sizes $(3,3,4)$ and a $6$-subset
$S$ satisfies the balanced distribution $(2,2,2)$ across the three groups, then no group contains a
triple of $S$. In particular, no argument restricted to recombining pairs \emph{within a single group}
can force a block meeting $S$ in three elements. This shows that, unlike the $(5,5)$ split, the
$(3,3,4)$ partition does not necessarily force the existence of a triple intersection.

\section{Conclusion}
We provided a simple explicit family of $920$ blocks of size $6$ guaranteeing a triple intersection
with every $6$-subset of $[60]$. The construction is elementary and deterministic, and the additional
discussion highlights how the underlying block partition structure governs when triple intersections
are forced.


\begin{thebibliography}{9}

\bibitem{CD07}
C.\,J. Colbourn and J.\,H. Dinitz (eds.),
\emph{Handbook of Combinatorial Designs}, 2nd ed.,
Chapman \& Hall/CRC, 2007.

\end{thebibliography}
\end{document}